\newcommand{\Rb}{\mathbb R}
\newcommand{\Zb}{\mathbb Z}
\newcommand{\lb}{\left(}
\newcommand{\rb}{\right)}
\newcommand{\diff}[2]{\frac{\partial #1}{\partial #2}}
\newcommand{\enbrace}[1]{\lb #1 \rb}
\newcommand{\inv}[1]{#1^{-1}}
\newcommand{\setdef}[2]{\left\{ #1\ \left\lvert\ #2 \right.\right\}}
\newcommand{\seqz}[2]{\left\{ {#1}_{#2} \right\}_{#2\in\Zb}}
\newcommand{\seq}[1]{ \left\{ #1 \right\} }
\newcommand{\half}[1]{\frac{#1}{2}}
\newcommand{\vmod}[1]{\left| #1 \right|}
\newcommand{\linhull}[1]{\left< #1 \right>}
\newcommand{\normban}[1]{\left\| #1 \right\|}
\newcommand{\Nint}{[-N,N-1]} 
\newcommand{\Nintw}{[-N+1,N]} 
\newcommand{\Nints}{[-N,N]} 
\newcommand{\Ninta}{[0,2N]} 
\newcommand{\enkav}[1]{``#1''}
\newlength{\negskiplength}
\newcommand{\expi}[1]{\exp_{#1}^{-1} }
\newcommand{\ekM}[2]{ \exp_{p_{#1} } \enbrace{#2} }
\newcommand{\ebaseptM}[1]{\ekM{\baseptind}{#1}}
\newcommand{\tvM}{\ebaseptM{v}}
\newcommand{\mfdM}{ M } 
\newcommand{\tgtM}[1]{ T_{p_{#1}} \mfdM } 
\newcommand{\tgt}[1]{ \Rb^n } 
\newcommand{\mfd}{ \Rb^n } 
\newcommand{\ekP}[1]{ p_{#1}}   
\newcommand{\ek}[2]{ \ekP{#1} + {#2} }
\newcommand{\dmettime}{1}
\newcommand{\vfield}{X}
\newcommand{\dmet}{\Psi}
\newcommand{\dmetsupp}{\Theta}
\newcommand{\dmetk}[1]{\dmet } 
\newcommand{\flow}{\Phi}
\newcommand{\dmetclassletter}{$\mathcal{T}$}
\newcommand{\dmetclass}[1]{\mathcal{T}_{#1}}
\newcommand{\dmetMap}{\tilde{\dmet}}
\newcommand{\tzero}{1}
\newcommand{\Rep}{\mathrm{Rep}}
\newcommand{\Repe}[1]{\Rep\enbrace{#1} }
\newcommand{\smtclass}{1}
\newcommand{\eqconst}{L_1}
\newcommand{\dadd}{^{(d)}}
\newcommand{\baseptind}{-N}
\newcommand{\basept}{p_{\baseptind}}
\newcommand{\baseptk}[1]{p_{#1}}
\newcommand{\ebasept}[1]{\ek{\baseptind}{#1}}
\newcommand{\dm}{ \dmet_{ \varkappa } }
\newcommand{\dms}{ \dmetsupp_{ \varkappa } }
\newcommand{\dmz}{ \dmet_{0} }
\newcommand{\dmsz}{ \dmetsupp_{ 0 } }
\newcommand{\dmo}{ \dmet_{1} }
\newcommand{\dmod}{ \dmet_{1}\dadd }
\newcommand{\dmsod}{ \dmetsupp_{1}\dadd }
\newcommand{\npk}[1]{ \tilde{p}_{ #1} }
\newcommand{\nAk}[1]{ \tilde{A}_{ #1} }
\newcommand{\nnpk}[1]{ \hat{p}_{  #1} }
\newcommand{\npkd}[1]{ \tilde{p}_{ #1}\dadd }
\newcommand{\nAkd}[1]{ \tilde{A}_{ #1}\dadd }
\newcommand{\nnpkd}[1]{ \hat{p}_{  #1}\dadd }
\newcommand{\om}[1]{\Omega_{\varkappa,#1} }
\newcommand{\omod}[1]{\Omega_{1,#1}\dadd }
\newcommand{\oned}{\frac{1}{d}}
\newcommand{\tdiff}{\sigma\dadd} 
\newcommand{\tdiffoned}{s\dadd }
\newcommand{\tdiffonedfin}{s^* }
\newcommand{\reparam}{\beta\dadd}
\newcommand{\reparamz}{\alpha}
\newcommand{\repz}{a}
\newcommand{\shdt}{y\dadd}  
\newcommand{\shdtz}{x}  
\newcommand{\shadptz}{p^*}
\newcommand{\shadptnodadd}{\hat{p}}
\newcommand{\shadpt}{\hat{p}\dadd}
\newcommand{\vd}{v\dadd}
\newcommand{\tv}{\ebasept{v}}
\newcommand{\tvd}{\ebasept{\vd}}
\newcommand{\vres}{w\dadd}
\newcommand{\vpreres}{W\dadd}
\newcommand{\vfin}{w^*}
\newcommand{\ldz}{\lim_{d\to 0}}
\newcommand{\osmX}{ g_1 }
\newcommand{\taufun}{ \tilde{g} }
\newcommand{\constdmet}{C_{\varkappa}(d)}
\newcommand{\constdmetz}{C_{0}(d)}
\newcommand{\constdmeto}{C_{1}(d)}
\newcommand{\maxderiv}{Q_1}
\newcommand{\maxvfield}{Q_2}
\newcommand{\maxshearflow}{Q_3}
\newcommand{\maxdiv}{Q_4}
\newcommand{\constpknpk}{K_5}
\newcommand{\kaocond}{if $\varkappa = 1$ then}
\newcommand{\kazcond}{if $\varkappa = 0$ then}
\newcommand{\CR}{\mathcal{CR}}
\DeclareMathOperator{\dist}{dist}
\DeclareMathOperator{\LISP}{LISP}
\DeclareMathOperator{\Id}{Id}
\numberwithin{equation}{section}
 \newtheorem{statement}{Statement}[section]
\title{Lipschitz inverse shadowing for nonsingular flows}
\author{
Dmitry Todorov $^{\rm a}$$^\ast$
 \thanks{
   $^\ast$ Dmitry Todorov. Email: todorovdi@gmail.com  \vspace{6pt}} 
  \\  \vspace{9pt}  $^{\rm a}$
  {\em{Chebyshev laboratory, Saint-Petersburg State University, 
  14th line of Vasiljevsky Island, 29B, Saint-Petersburg, 199178, Russia}};
  }
\begin{document}

\maketitle

\begin{abstract}
    We prove that Lipschitz inverse shadowing for 
    nonsingular flows is equivalent to structural stability.
\end{abstract}

\begin{keywords}
    structural stability, shadowing, nonsingular flows
\end{keywords}

\begin{classcode}
    MSC 2010: 37C50, 34D30
\end{classcode}

\section{Introduction}


The notion of inverse shadowing was introduced by Pilyugin and Corless in
 \cite{PIL__APPROX_AND_REAL_TRAJ_FOR_GEN_DYN_SYS} and by Kloeden and Ombach in 
 \cite{KLOEDEN__HYP_HOME_AND_BISHAD}.
 They defined this notion for diffeomorphisms. Inverse shadowing for flows 
 was first introduced in \cite{LEE__INV_SHAD_FOR_EXP_FLOWS}.

 It is known that both Lipschitz shadowing and Lipschitz inverse shadowing properties 
 for diffeomorphisms are equivalent to structural stability
\cite{PILSDS, PILMELISP, PIL__LIP_SH_AND_SS_FOR_FLOWS}.
In \cite{LEE__INV_SHAD_FOR_SS_FLOWS} the authors proved that
structural stability for flows implies inverse shadowing.
In fact, they proved that structural stability implies Lipschitz inverse shadowing
although they did not use this term in their paper.

We prove that Lipschitz inverse shadowing for flows without rest points implies
structural stability.

\section{Definitions} \label{sec:defs}

Let $\vfield$ be a $C^{\smtclass}$ vector field 
on a Riemannian manifold
$\mfdM$ with metric $\dist$
and let $\flow$ be a flow generated by $X.$
We will only consider cases when $\mfdM$ is closed and when $\mfdM=\mfd$.

Let $d$ be a (small) positive number.
\begin{definition}
   We say that a mapping  
     $\dmet:\Rb \times\mfdM \to \mfdM$ is a $d$-method for flow $\flow$ if for any $t\in\Rb$, 
    \begin{equation}
        \dist\enbrace{\dmet(t+s,x),\flow(s,\dmet(t,x)) } < d
        ,\quad s\in [-\dmettime,\dmettime] , 
        \label{neq:dmetdef}
    \end{equation}
    and  $\dmet(0,x) = x$ for any  $x\in \mfdM.$
\end{definition}

We introduce several classes of $d$-methods, following \cite{LEE__INV_SHAD_FOR_SS_FLOWS}. 
Let $\dmet$ be a $d$-method. Denote by $(\mfdM)^\Rb$ the set of all functions
from $\Rb$ to $\mfdM.$ Consider a mapping
$$\dmetMap: \mfdM \to (\mfdM)^\Rb,$$
defined as 
$$\enbrace{\dmetMap(x)}(t)=\dmet(t,x),\quad x\in \mfdM,\ t\in\Rb.$$
\begin{itemize}
    \item We say that the $d$-method $\dmet$ belongs to the class $\dmetclass{p},$ if 
       the mapping  $\dmetMap$ is continuous in the pointwise-convergence topology on $(\mfdM)^\Rb.$
    \item We say that the $d$-method $\dmet$ belongs to the class $\dmetclass{o},$ if  
        the mapping $\dmetMap$ is continuous in compact-open topology on $(\mfdM)^\Rb.$
    \item We say that the $d$-method $\dmet$ belongs to the class $\dmetclass{c},$ if  
        it is continuous as a mapping of the form $\Rb\times \mfdM\to \mfdM.$
    \item We say that the $d$-method $\dmet$ belongs to the class $\dmetclass{h},$ if  
        it is a flow of some $C^1$ vector field $Y$ that satisfies $$d_0(X,Y)\leq d.$$ 
        Here $d_0$ is the $C^0$ metric on the space of $C^1$ vector fields on $\mfdM.$
    \item We say that the $d$-method $\dmet$ belongs to the class $\dmetclass{s},$ if  
        it is smooth as a mapping of the form $\Rb\times \mfdM\to \mfdM.$
\end{itemize}

\begin{remark}
    Our definition of a $d$-method is a definition of a family of 
    pseudotrajectories (see the definition of a $d$-pseudotrajectory of 
    a flow in \cite{PILSDS}). If a method belongs to one of the classes defined above,
     then the dependence of a pseudotrajectory on a point has some continuity 
    (smoothness) properties. 
\end{remark}

Define $\Rep$ as a set of all increasing homeomorphisms $\alpha:\Rb\to\Rb$ 
with $\alpha(0)=0.$ We introduce also the following notation:
\begin{equation*}
    \Repe{\delta} = \setdef{\alpha\in\Rep}{\vmod{\frac{\alpha(t)-
    \alpha(s)}{t-s} - 1 }\leq \delta,\quad t\neq s}.
\end{equation*}

\begin{definition}
    Let 
    \dmetclassletter be one of the classes of methods defined above.
    We say that a flow $\flow$ has Lipschitz inverse shadowing property 
    (we write $\flow\in \LISP$ in this case) with respect to the class \dmetclassletter if 
    for any point  $p\in \mfdM$ there exist constants  $L,d_0,$ such that  
    for any $d$-method $\dmet$ of the class \dmetclassletter with $d\leq d_0$ there exist
    a point $\shadptnodadd\in \mfdM$ and a reparametrisation $\alpha\in\Repe{Ld}$ such that
    \begin{equation}
        \dist(\flow(t,p),\dmet(\alpha(t),\shadptnodadd)) < Ld,\quad t\in \Rb.
    \end{equation}
\end{definition}

\begin{remark} \label{rem:invRep}
    It is easy to see that if $\delta$ is small enough and  
    $\alpha\in\Repe{\delta}$ then $\alpha^{-1}\in\Repe{2\delta}.$
\end{remark}

\begin{remark}
    Obviously the inclusion  $\flow\in \LISP$ with respect to the class $\dmetclass{c}$ 
    implies that $\flow \in \LISP$ with respect to the classes $\dmetclass{h}$ and $\dmetclass{s}.$
\end{remark}

\begin{remark}
    It is proved in \cite{LEE__INV_SHAD_FOR_SS_FLOWS} that the inverse shadowing properties 
    with respect to the classes of methods $\dmetclass{c},\ \dmetclass{o}$ and
    $\dmetclass{p}$ are equivalent. We do not discuss the class $\dmetclass{h}.$  
    Further away we write just about Lipschitz inverse shadowing
    without mentioning a class of methods, always meaning the class $\dmetclass{s}$.
\end{remark}

%
%
%
%
%
%
%


\section{Main Results}

The main result of the work is the following theorem:


\begin{theorem} \label{thm:mainth}
Let $\mfdM$ be a closed manifold and let the flow $\flow$ have no rest points. 
Then $\flow$ is structurally stable iff $\flow\in \LISP.$
\end{theorem}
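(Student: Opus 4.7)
The implication from structural stability to $\flow\in\LISP$ is already established in~\cite{LEE__INV_SHAD_FOR_SS_FLOWS}, so the task is to show that $\flow\in\LISP$ with respect to $\dmetclass{s}$ forces structural stability. My approach adapts to nonsingular flows the strategy used for diffeomorphisms in~\cite{PILMELISP}. By Robinson's structural stability theorem for flows on the closed manifold $\mfdM$, structural stability is equivalent to Axiom~A together with strong transversality, and both ingredients can be read off from exponential dichotomies of the linearised variational equation, projected onto the normal bundle, along every orbit of $\flow$. The plan is to extract such dichotomies directly from the Lipschitz constant supplied by $\LISP$.

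Fix a point $p\in\mfdM$ and the variational equation $\dot{w}=A(t)w$ on the normal bundle of the $\flow$-orbit through $p$, where $A(t)$ is obtained by projecting $D\vfield$ along the orbit. Given a bounded smooth section $f$ of that normal bundle, I would construct a smooth perturbation $Y=\vfield+dZ$ of $\vfield$, where $Z$ is supported in a thin tubular neighbourhood of the orbit and equals a suitable cut-off copy of $f$ in the normal direction. The flow $\dmet$ of $Y$ then belongs to $\dmetclass{s}$ with parameter comparable to $d$, so $\LISP$ yields $\shadptnodadd\in\mfdM$ and $\alpha\in\Repe{Ld}$ with $\dist(\dmet(t,p),\flow(\alpha(t),\shadptnodadd))<Ld$ for every $t\in\Rb$. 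Expanding this bound to first order in $d$, with the tangential drift absorbed into $\alpha$ (whose inverse is controlled by Remark~\ref{rem:invRep}) and the normal discrepancy furnishing a bounded solution of $\dot{w}=A(t)w+f(t)$, produces a uniform bound on that solution that is linear in $\|f\|_\infty$. A Pliss/Maizel-type dichotomy theorem then converts this uniform solvability into an exponential dichotomy of $\dot{w}=A(t)w$ on $\Rb$.

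Running the argument on every orbit yields hyperbolicity of the chain recurrent set, and on the half-line variants for orbits lying on stable or unstable manifolds it supplies the splittings needed for strong transversality; choosing $Z$ so that a failure of transversality would push the shadow out of the $Ld$-tube completes that condition, and Robinson's theorem then delivers structural stability. The main obstacle I anticipate is the linearisation step: distilling, from the nonlinear shadowing bound supplied by $\LISP$, an honest bounded solution of the inhomogeneous linear variational equation requires a careful decoupling of the tangential reparametrisation from the normal deviation, together with uniform control of the $o(d)$ terms as $d\to 0$. The tubular construction of $Z$ must also be arranged so that $Z$ is genuinely $C^\infty$ and realises an arbitrarily prescribed bounded $f$ while remaining of order $d$, which forces one to work with a partition of unity along the orbit to avoid interference near self-returns.
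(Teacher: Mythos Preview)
Your high-level plan matches the paper's: cite \cite{LEE__INV_SHAD_FOR_SS_FLOWS} for one direction, and for the other encode an arbitrary inhomogeneity into a $d$-method, apply $\LISP$, divide by $d$ and let $d\to 0$ to obtain a bounded solution of the inhomogeneous linearised system, then feed this into a ``solvability $\Rightarrow$ hyperbolicity $+$ transversality'' criterion. The paper does this via the discrete equations $v_{k+1}=B_k v_k+b_{k+1}$ on the normal bundle and a ready-made theorem from \cite{PIL__LIP_SH_AND_SS_FOR_FLOWS} that returns hyperbolicity of $\CR$ and strong transversality in one shot, which is tidier than your separate Pliss/Maizel and half-line arguments, but that difference is cosmetic.

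The substantive gap is in your construction of the $d$-method. You take $Y=\vfield+dZ$ with $Z$ a vector field on $\mfdM$, i.e.\ a method in $\dmetclass{h}$. Then the inhomogeneity seen along the orbit of $p$ is $t\mapsto Z(\flow(t,p))$, a function of the \emph{position} $\flow(t,p)$ alone. On a closed manifold with no rest points every orbit is recurrent, so distinct times with nearby positions are forced to carry nearly equal values of $Z$; on a periodic orbit the inhomogeneity is exactly periodic. You therefore cannot realise an arbitrary bounded $f$ (or an arbitrary bounded sequence $\{z_k\}$) this way, and a ``partition of unity along the orbit'' does not rescue you, because $Z$ lives on $\mfdM$, not on $\Rb\times\mfdM$. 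This is precisely why the paper does \emph{not} perturb the vector field: it builds, on each finite window $[-N,N]$, a smooth map $\dmet\colon\Rb\times\mfdM\to\mfdM$ in $\dmetclass{s}\setminus\dmetclass{h}$ that equals the flow outside a small ball around $p_{-N}$ and, inside that ball, is hand-crafted (via an interpolation function $\Gamma$ and auxiliary maps $\Omega_{\varkappa,k}$) so that $\dmet(k,\cdot)$ encodes the prescribed $z_k$ \emph{independently at each integer time}. The freedom to use a smooth $d$-method that is not a flow is exactly what decouples the inhomogeneity from the recurrence structure of the orbit; verifying that this non-flow object is nevertheless a genuine $C(d)$-method with $C(d)/d$ bounded independently of $N$ is the content of Section~\ref{sec:proofR}. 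Your proposal would go through if you replaced the vector-field perturbation by such a general $\dmetclass{s}$-method, but as written the step ``$Z$ realises an arbitrarily prescribed bounded $f$'' fails on every chain-recurrent orbit, which is the only case that matters.
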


\section{The idea of the proof}

We will use the same idea that the authors of \cite{PIL__LIP_SH_AND_SS_FOR_FLOWS} used.

Fix a point $p\in \mfdM.$ Denote $f=\flow(1,\cdot),$ 
$p_k=f^k(p),\ k\in\Zb,$  and  $A_k = Df(p_k),\ k\in\Zb.$
Let $P_k:\tgtM{k}\to \tgtM{k}$ be the orthogonal projections with kernels 
 $\linhull{X(p_k)}$ 
and let $V_k$ be the orthogonal complement to $X(p_k).$ Denote $B_k = P_{k+1}A_k : V_k\to V_{k+1}.$
    Consider the following system of difference equations
    \begin{equation}\label{eqs:tangenteqsnf}
        v_{k+1} = B_k v_k + b_{k+1},\ k\in\Zb.
    \end{equation}

Let $\CR$ be the chain-recurrent set of a flow  $\flow$.
The following is proved in \cite{PIL__LIP_SH_AND_SS_FOR_FLOWS}:
\begin{theorem}
    Let $\mfdM$ be a closed manifold.
    Suppose that 
    there exists a constant $\eqconst$ such that 
    for any point  $p$ and any bounded sequence $\seqz{b}{k}$ with entries from 
    the corresponding  
    $V_k,$ equations \eqref{eqs:tangenteqsnf} have a solution $\seqz{v}{k}$ 
    with the norm bounded by $\eqconst\normban{b}.$
    Then
    \begin{itemize}
        \item the set $\CR$ is hyperbolic; 
        \item the strong transversality condition is fulfilled.
    \end{itemize}
\end{theorem}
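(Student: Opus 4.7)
The plan is to convert the hypothesis --- uniform solvability of the inhomogeneous linear difference equations along every orbit --- into an exponential dichotomy for the sequence $\seqz{B}{k}$, and then to read off hyperbolicity of $\CR$ and strong transversality from this dichotomy. The central ingredient is the classical Maizel--Pliss--Palmer characterization: for a sequence of uniformly invertible linear maps, uniformly bounded solvability of $v_{k+1}=B_k v_k + b_{k+1}$ in $\ell^\infty$ is equivalent to the existence of an exponential dichotomy. Since $B_k=P_{k+1}A_k$ is uniformly bounded above and below on the compact manifold $\mfdM$ and the constant $\eqconst$ in the hypothesis is independent of $p$, this yields an exponential dichotomy for $B_k$ at every $p\in \mfdM$ with uniform dichotomy constants.

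To conclude that $\CR$ is hyperbolic, I would use the pointwise dichotomies to define continuous invariant stable and unstable subbundles $E^s, E^u$ of the bundle $\seq{V_k}$ along every orbit. Standard Sacker--Sell-type arguments, combined with compactness of $\mfdM$ and the uniformity of $\eqconst$, deliver continuous dependence of these subbundles on the basepoint. Combining the splitting on $\CR$ with the flow direction $\linhull{\vfield(p)}$ produces the required hyperbolic splitting of $T_p\mfdM$ for $p\in \CR$; the dichotomy rates for the time-one map $f=\flow(1,\cdot)$ transfer to continuous-time hyperbolicity of $\flow$ in the standard way.

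For the strong transversality condition, fix any $p\in \mfdM$ and let $\omega(p), \alpha(p)$ denote its $\omega$- and $\alpha$-limit sets, both of which lie in the (now hyperbolic) set $\CR$. Then $p$ lies in the stable manifold $W^s(\omega(p))$ and in the unstable manifold $W^u(\alpha(p))$. Using variation of parameters, the bounded solvability of the inhomogeneous equation along the orbit of $p$ translates into the statement that $T_p W^s(\omega(p))$ and $T_p W^u(\alpha(p))$, together with $\linhull{\vfield(p)}$, span $T_p\mfdM$, which is exactly the strong transversality condition.

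The main obstacle is the first step: ensuring that the dichotomy produced by the Maizel--Pliss--Palmer theorem is uniform in $p\in \mfdM$ and that the resulting stable/unstable subbundles depend continuously on $p$. A pointwise dichotomy does not by itself yield a continuous bundle, but combining the uniform bound $\eqconst$, compactness of $\mfdM$, and a closing/limit argument at chain-recurrent points should close this gap. Once the uniform hyperbolic structure on $\CR$ is in place, the strong transversality conclusion is essentially a translation of bounded solvability of the linearized equation into a transverse intersection of invariant manifolds.
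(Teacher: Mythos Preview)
The paper does not prove this theorem at all: it is stated with the preface ``The following is proved in \cite{PIL__LIP_SH_AND_SS_FOR_FLOWS}'' and is used as a black box. There is therefore no proof in the present paper to compare your proposal against.

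That said, your outline is essentially the strategy of the cited reference. There, as in your sketch, the key step is the Ma\u{\i}zel--Pliss--Palmer theorem: uniform $\ell^\infty$-solvability of $v_{k+1}=B_k v_k+b_{k+1}$ with the operator norm bounded by a constant independent of the orbit yields an exponential dichotomy for $\seqz{B}{k}$ with uniform constants; on chain-recurrent points this dichotomy is over all of $\Zb$ and gives the hyperbolic splitting of the normal bundle, while for an arbitrary point the half-line dichotomies identify the tangent spaces to the stable and unstable manifolds and Palmer's lemma on the sum of the ranges of the half-line projections yields strong transversality. The obstacle you flag --- passing from pointwise dichotomies to a continuous invariant splitting on $\CR$ --- is handled there exactly as you suggest, via the uniformity of $\eqconst$ and compactness. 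So your proposal is on the right track, but be aware that the present paper simply quotes this result rather than reproving it.
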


It is known ( e.g. \cite{SELGRADE__HYP_AND_CHAIN_RECUR} ), 
that the hyperbolicity of the chain-recurrent set implies 
Axiom $A'.$ In turn, Axiom $A'$ and strong transversality condition imply
structural stability.
So the \enkav{only if} part of Theorem \ref{thm:mainth} will be proved if
we prove that the conditions of the previous theorem are satisfied.  
The \enkav{if} part is proved in \cite{LEE__INV_SHAD_FOR_SS_FLOWS}.

The reason we consider only vector fields without singularities is
that unlike in \cite{PIL__LIP_SH_AND_SS_FOR_FLOWS} we cannot prove
that singularities are isolated in $\CR.$ If we knew that they are
than we could easily show that they are hyperbolic and prove 
Theorem \ref{thm:mainth} holds.

\begin{definition}
    We say that a flow $\flow$
    satisfy condition (UB) if 
\begin{enumerate}                                
    \item 
        the norms of the derivatives of the flow $\flow(x,s)$ with respect to initial data
        for $s\in[-1,1]$ are bounded. I.e., there exists 
$$ \maxderiv = \max_{x\in M,\ s\in [-1,1]} \normban{\diff{\flow(s,x)}{x}} .$$
    \item
        The lengths of vectors of the vector field $\vfield$ are bounded. I.e., there exists
       $$\maxvfield = \max_{x\in\mfd} \vmod{\vfield(x)}.$$
    \item  
        The reminder in the Taylor expansion of the flow $\flow$ is uniformly bounded. 

        Let the manifold $M$ be covered by a countable number of 
        open balls $V_i$ of the same radius each admitting a coordinate chart.
        
        There exists a monotonous function   
        $\osmX:[0,\infty)\to[0,\infty)$ with the following property. 
        If for $x\in M,\ t\in \Rb$ we fix a coorinate chart of 
        the ball $V_i$  
        containing the point $\flow(t,x)$ and denote the 
        representation of the flow $\flow$ in this chart by the same letter $\flow$,
        then for any  $h_1\in[-1,1]$, $h_2\in T_x M$, $\vmod{h_2}<1$, 
        the following estimate is fulfilled
        $$ \vmod{\flow(t+h_1,x+h_2) - \flow(t,x) -h_1 X(\flow(t,x) ) - 
        \diff{\flow(t,x)}{x}h_2  } \leq \osmX (\vmod{h_1}+\vmod{h_2}),$$
        where $\osmX (\vmod{h_1}+\vmod{h_2})/(\vmod{h_1}+\vmod{h_2})$ 
        tends to $0$ uniformly in $x$ for $(h_1,h_2)\to 0.$ 
        
        Here we assume that all the points $\flow(t+h_1,x+h_2)$
        belong to $V_i$, otherwise we can reparametrize the flow globally.
\end{enumerate}
\end{definition}
A flow generated by $C^{\smtclass}$ vector field on a closed manifold 
satisfies this condition. 

At first we prove the solvability of equations that are different from 
equations \eqref{eqs:tangenteqsnf}:

\begin{statement}  \label{st:solveqs}
    Let the flow have Lipschitz inverse shadowing property and satisfy condition (UB).
    Then there exists a constant  $\eqconst$ such that
    for any point $p\in \mfdM$ and any inhomogeneity  $\seqz{z}{k}$  
    that satisfies $\vmod{z_k}\leq 1,\ k\in\Zb,$ for any natural $N$
    there exists a sequence of real numbers
     $\seq{s_k}_{k\in\Nints}$ such that the system of equations  
\begin{equation}\label{eqs:tangenteqsx}
    x_{k+1} = A_k x_k + X(p_{k+1}) s_k + z_{k+1},\quad k\in\Nint
\end{equation}
has a solution $\seq{x^{(N)}_k}_{k\in\Nints}$ such that 
    $\vmod{x^{(N)}_k}\leq \eqconst,\ k\in\Nints.$
\end{statement} 

The proof of this statement is the main difficulty and
is given later.
Now we show how the solvability of equations \eqref{eqs:tangenteqsx} 
implies the solvability of equations  \eqref{eqs:tangenteqsnf}.
The proof of the next corollary is similar to the proof of Lemma 2 from 
\cite{PIL__LIP_SH_AND_SS_FOR_FLOWS}.

\begin{corollary} \label{thm:solveqsnf}
    For any sequence $\seqz{b}{k}$ that satisfies $\vmod{b_k} \leq 1$ and $b_k \in V_k$ 
    for all integer  $k$ there exists a solution  
      $\seqz{v}{k}$ of system of equations \eqref{eqs:tangenteqsnf} such that
    $\vmod{v_k}\leq L_1.$
\end{corollary}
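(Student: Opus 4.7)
The plan is to deduce the bounded solvability of \eqref{eqs:tangenteqsnf} on $\Zb$ from Statement \ref{st:solveqs} by \emph{projecting} its solutions in the ambient tangent spaces $T_{p_k}\mfdM$ onto the orthogonal complements $V_k$, and then extracting a convergent subsequence as $N\to\infty$.

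First, I would apply Statement \ref{st:solveqs} to the point $p$ with inhomogeneity $z_k := b_k$; this is admissible since $b_k \in V_k \subset T_{p_k}\mfdM$ and $\vmod{b_k}\le 1$. For each natural $N$ one obtains scalars $\seq{s_k^{(N)}}_{k\in\Nints}$ and vectors $\seq{x_k^{(N)}}_{k\in\Nints}$ with $\vmod{x_k^{(N)}}\le \eqconst$ satisfying
\begin{equation*}
x_{k+1}^{(N)} = A_k x_k^{(N)} + X(p_{k+1}) s_k^{(N)} + b_{k+1},\qquad k\in\Nint.
\end{equation*}

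Second, I would project: set $v_k^{(N)} := P_k x_k^{(N)}\in V_k$, so that $\vmod{v_k^{(N)}}\le\eqconst$. Applying $P_{k+1}$ to the displayed equation kills the shear term (since $\ker P_{k+1}=\linhull{X(p_{k+1})}$) and leaves $b_{k+1}$ invariant (since $b_{k+1}\in V_{k+1}$). For the remaining summand I would invoke the key identity
\begin{equation*}
A_k X(p_k) = Df(p_k)\, X(p_k) = X(f(p_k)) = X(p_{k+1}),
\end{equation*}
which expresses the invariance of $X$ under its own flow. Decomposing $x_k^{(N)} = v_k^{(N)} + \tau_k X(p_k)$ for a suitable scalar $\tau_k$, this identity yields $A_k x_k^{(N)} = A_k v_k^{(N)} + \tau_k X(p_{k+1})$, so after applying $P_{k+1}$
\begin{equation*}
v_{k+1}^{(N)} = P_{k+1} A_k v_k^{(N)} + b_{k+1} = B_k v_k^{(N)} + b_{k+1},\qquad k\in\Nint,
\end{equation*}
i.e.\ $v^{(N)}$ solves \eqref{eqs:tangenteqsnf} on $\Nints$ with the uniform bound $\eqconst$.

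Finally, I would pass to the limit $N\to\infty$. For each fixed $k\in\Zb$ the sequence $\seq{v_k^{(N)}}_{N\ge |k|}$ lies in the finite-dimensional space $V_k$ and is uniformly bounded by $\eqconst$, so a standard Cantor diagonal extraction produces a subsequence $N_j\to\infty$ along which $v_k^{(N_j)} \to v_k$ for every $k$. Taking limits in the recurrence yields $v_{k+1} = B_k v_k + b_{k+1}$ with $\vmod{v_k}\le \eqconst$, the desired bounded solution on all of $\Zb$. I do not anticipate any substantial obstacle in the corollary itself: the only non-routine ingredient is the invariance identity $A_k X(p_k) = X(p_{k+1})$, which makes the shear term disappear after projecting onto $V_{k+1}$; the entire genuine difficulty has been packaged into Statement \ref{st:solveqs}.
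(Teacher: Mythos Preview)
Your proposal is correct and follows essentially the same route as the paper: set $z_k=b_k$, apply Statement~\ref{st:solveqs}, project via $P_{k+1}$ (using $A_k X(p_k)=X(p_{k+1})$ to kill the flow-direction terms and obtain $P_{k+1}A_k = P_{k+1}A_k P_k$), and then extract a diagonal subsequence as $N\to\infty$. Your write-up is in fact slightly more explicit than the paper's, which contains a small slip (it writes ``$A_k(\Id-P_k)=0$'' where ``$P_{k+1}A_k(\Id-P_k)=0$'' is what is meant).
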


\begin{proof}
    Take $z_k=b_k$ in equations \eqref{eqs:tangenteqsx}. 
    Statement \ref{st:solveqs} guarantees that there exists a constant
     $L_1$ such that for any integer $N$ there exists a sequence
     $\seq{s_k}_{k\in\Nints}$ such that system of equations  \eqref{eqs:tangenteqsx}
    has a solution $\seq{x_k}_{k\in\Nints}$ with the norm bounded by $L_1.$

    Fix $k\in\Nints$
    Note that $A_k X(p_k) = X(p_{k+1}).$ The definition of the projections  $P_k$ 
    implies the inclusion $(\Id-P_k)v\in \linhull{X(p_k)}$ for any  $v\in \tgtM{k}.$ 
    Thus $A_k(Id-P_k) = 0.$
    Now we have the equality
    $$ P_{k+1}A_k = P_{k+1}A_k P_k.$$

    Multiply equalities \eqref{eqs:tangenteqsx} by $P_{k+1}:$
    \begin{gather*} 
    P_{k+1} x_{k+1} = P_{k+1} A_k x_k + P_{k+1} X(p_{k+1}) s_k + P_{k+1} z_{k+1} = \\
     = P_{k+1} A_k P_k x_k + b_{k+1},\quad k\in\Nint. 
    \end{gather*}
    Thus the sequence $v^{(N)}_k = P_k x_k$ is a solution of equations 
    \eqref{eqs:tangenteqsnf} for a finite number of indices. Now we can
    pass to the limit as $N\to\infty.$ Due to the boundedness, $v^{(N)}_k$ has a limit  
     $v_k$ whose norm is also bounded by  $L_1.$

\end{proof}


\section{Proof of Statement \ref{st:solveqs} for $M = \Rb^n$} \label{sec:proofR}

At first we prove Statement \ref{st:solveqs} when $M = \Rb^n$ 
and the vector field $X$ satisfies condition (UB).  
This will allow us to demonstrate the idea of the proof with less amount of
technical details.

Fix a natural $N.$ We are going to use a method from \cite{PILMELISP}.
We will \enkav{inscribe} equations into a method and use 
the Lipschitz inverse shadowing property.

Let us construct a method of the class $\dmetclass{s}$ that contains
the equations \enkav{inside}.

Let $L,\ d_0$ be constants from the definition of the $\LISP,$ 
and $d$ be a positive number which we decrease later.

Fix positive numbers $r$ and $\tau.$ Assume that $\tau$ satisfies  $100\tau < 1-\tau$ 
and $\osmX(s) < s,\ s\in[0,\tau].$
Let $\gamma:[-\tau,\tau]\times[0,r]\to[0,1]$ be a function with the following
properties (see Fig. \ref{fig:gammagraph})
\begin{itemize}
    \item $\gamma(-\tau, \cdot) = 0;$
    \item $\gamma(\cdot, r) = 0;$
    \item $\gamma(t,s) = 1,\quad t\in[-\half{\tau},{\tau}],\ s\in[0, \half{r}];$
    \item $\gamma(t,s) \in (0,1),\quad (t,s)\not\in[-\half{\tau},{\tau}]\times [0, \half{r}];$
    \item $\gamma$ is smooth as a function of two arguments. 
\end{itemize}

\begin{figure}
        \centering
        \def\svgwidth{0.5\textwidth}
        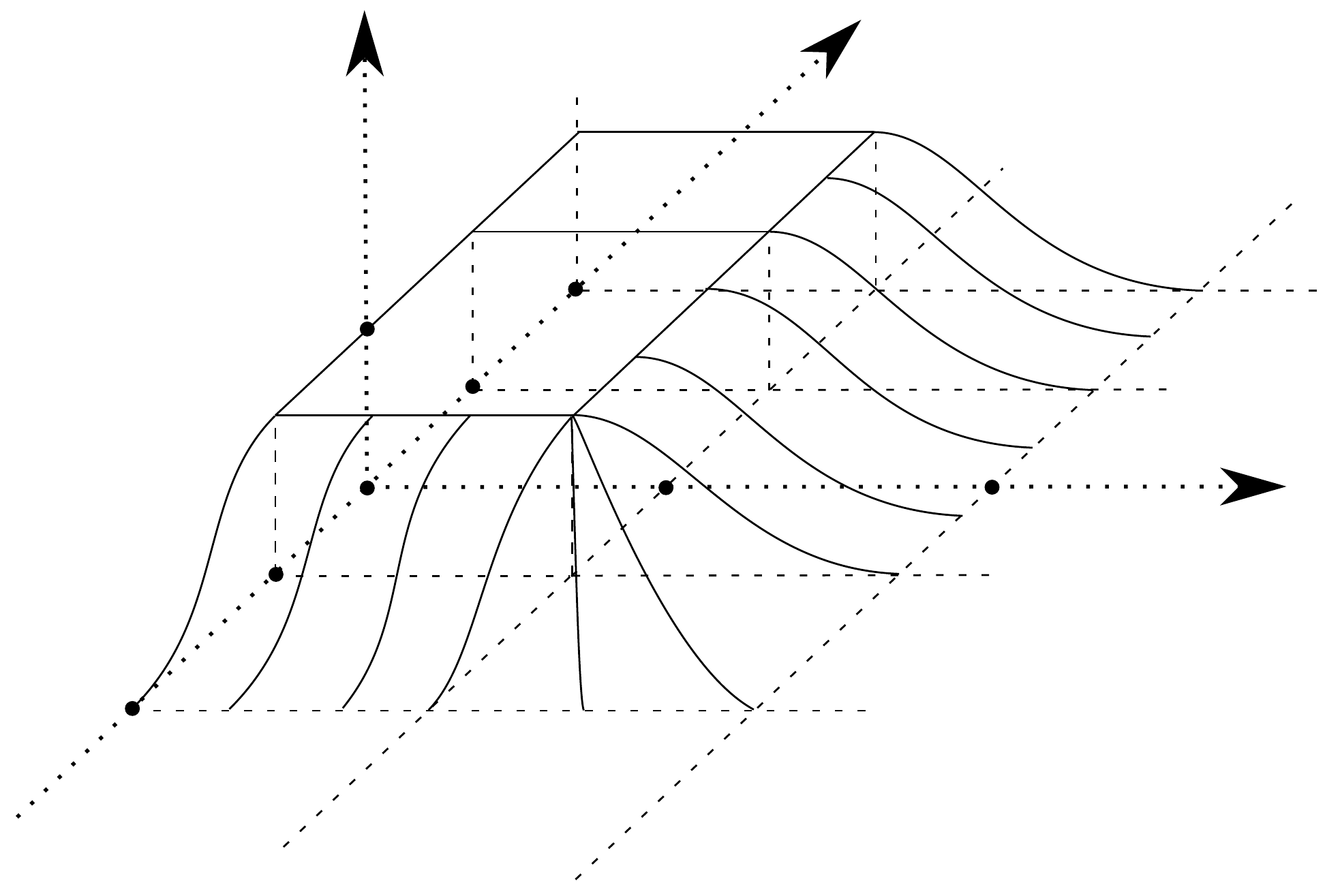
    \caption{The graph of  $\gamma$.}
    \label{fig:gammagraph}
\end{figure}


%
%

Let  $\taufun(d) $  be a function such that it tends to zero faster than $d$ and satisfies
$\osmX(\taufun(d))/d \to 0,\ d\to 0. $ 

\subsection{Construction of the method} \label{subsec:buildmetR}

Let $\varkappa$ be either  $0$  or  $1$ and
$k$ be an integer from  $\Nints.$  
Suppose that the functions  $\dm(k,\cdot),\ \dm(k+\tau,\cdot)$ are already defined. 
Denote
$$\npk{k} = \dm(k,\basept),\quad \nAk{k} = \diff{\flow(1-\tau,\dm(k+\tau,\basept) )}{x},$$
$$ \quad \nnpk{k+1} = \flow(1-\tau,\dm(k+\tau) ),\ \nnpk{\baseptind} = \basept.$$
If $k<N,$ define the function $\om{k+1}:
[-\tau,\tau]\times\tgt{\baseptind} \to \mfd $ in the following way:
\begin{gather*}
 \om{k+1}(s,v) =  \nnpk{k+1}  + 
\varkappa \nAk{k} \enbrace{ \dm(k,\tv) - \npk{k} + dz_k } + \\ 
+ \varkappa X ( \nnpk{k+1} ) s + d z_{k+1} - \varkappa d\nAk{k} z_k.
\end{gather*}
%
%
%
We also define the interpolation function
$\Gamma:\mfd\times\mfd\times \mfd \times [-\tau,\tau]\to \mfd :$
\begin{equation*}
    \Gamma(x,y,v,s) =  \gamma(s,\vmod{v})x + (1-\gamma(s,\vmod{v}) ) y  .  
\end{equation*}

Define the method $\dm$ in the following way 
 (see Fig. \ref{fig:dmetstructure}):

 \begin{gather*}
\dm(t,x) =  \flow(t,x),    \quad t\in \Rb,\quad x\notin B_r(\basept); \\
    \dm(t,x) =  \flow(t,x),  \quad t\in(-\infty,\tzero-\tau],\quad x\in \mfd; \\
    \dm(k+s,\tv) =   \Gamma \enbrace{ \om{k}(s,v) 
    ,\flow(1-\tau+s, \dm(k-1+\tau,\tv) ),v,s }, \\
    k\in[-N+1,N],\quad s\in[-\tau,\tau],\quad v\in B_r(0);\\
    \dm(k+\tau+s,\tv) =   \flow(s,\dm(k+\tau,\tv)), \\
    k\in[-N+1,N-1],\quad s\in[0,1-2\tau],\quad v\in B_r(0); \\
    \dm(N+\tau+s,\tv) =   \flow(s,\dm(N+\tau,\tv)),\quad
    s\in[0,\infty),\quad v\in B_r(0).
 \end{gather*}
Here  $B_r(x)$ is the ball of radius $r$ centered at $x$. 

\begin{figure}
        \centering                                 
        \def\svgwidth{\textwidth}
        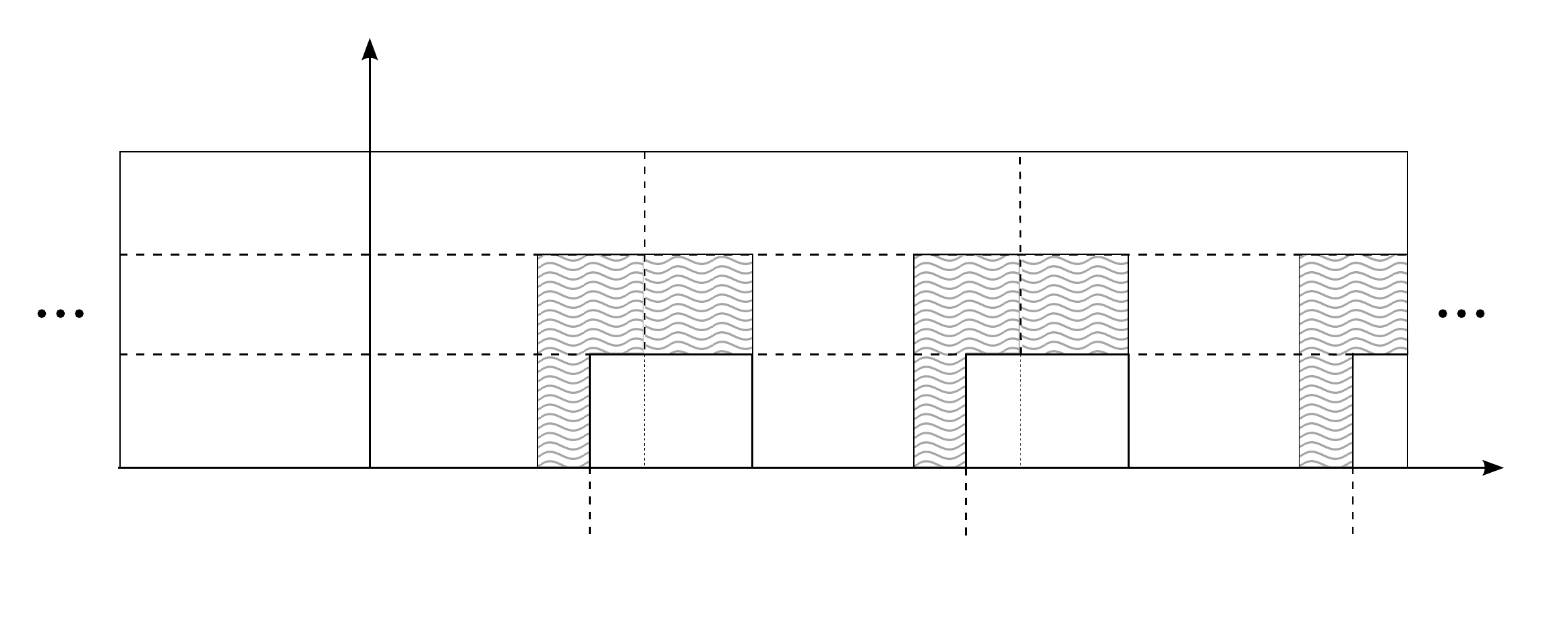
    \caption{Schematic picture of the method $\dm.$ 
    We added $N$ to the numbers below  $t$ axis for the ease of display. }
    \label{fig:dmetstructure}
\end{figure}

It is easy to see that the mapping we just defined is smooth.
Note that the mapping is a perturbation of the flow only at
points $x$ that are close to $\basept.$  
We emphasize that the mapping  $\dm$ depends both on   $d$ and $r.$
Note also that  $\npk{k}$ and $\nnpk{k}$ do not depend on  $\varkappa:$ 
$$ \npk{k} = \dmz(k,\basept) = \dmo(k,\basept)= \nnpk{k} + z_k,\quad k\in\Nints.$$
Thus we may rewrite the formula for $\om{k+1}$ in the following way:
\begin{IEEEeqnarray}{c}
    \om{k+1}(s,v) =  \nnpk{k+1}  + 
    \varkappa \nAk{k} \enbrace{ \dm(k,\tv) - \nnpk{k} } + \IEEEnonumber \\ 
    + \varkappa X ( \nnpk{k+1} ) s + d z_{k+1}, \quad k\in\Nint.
    \label{eq:omegadef}
\end{IEEEeqnarray}

We also define a supplementary mapping $\dms(t,x) = \dm(t+N,x).$
We need it because we have $t=0$ in the definition of a $d$-method 
and Lipschitz inverse shadowing.




\subsection{$d$-method conditions verification }


Let us prove that the mapping $\dms$ that we have constructed is a $\constdmet$-method
for some positive $\constdmet$ that is independent of  $N.$
It is obvious that $\dms(0,\cdot) = \Id.$ 
We need to check that inequalities \eqref{neq:dmetdef} are satisfied. 
The mapping  $\dms$ satisfies them iff $\dm$ does.
Therefore it is enough to estimate the following value 
for $t \in \Rb, \ s\in [-1,1]:$
\begin{equation*}                                    
    \delta_{t,s} = \vmod{\dm(t+s,\basept)-\flow(s,\dm(t,\basept) )} .
\end{equation*}
Since the value of the method $\dm(u,\basept)$ for
$u \in [-\infty,1-\tau]$ is equal to $\flow(u,\basept),$
we neet to estimate  $\delta_{t,s} $ only for
 $t,t+s \in[1-\tau,+\infty].$ 
 We will only consider the case of $s\geq 0,$ because for $s < 0$ 
 the estimates could be written in a similar way.

We call sections the sets $[k-\tau,k+\tau],\ k \geq 1$.
 Note that for  $t$ that is outside any section the following is true: 
 if we increase $s$ from $0$ to $1$ then $t+s$ will cross only one section  
 for a set of $s$ of nonzero measure. If  $t$ is in a section
 then the number of sections we will cross is equal to $2.$ 

Let $k$ be the biggest integer that is less than $t.$ I.e., $k=\lfloor t \rfloor$.
Consider the following cases 
\begin{itemize}
    \item $t$ and $t+s$ are outside sections, $s < 2\tau;$ 
    \item $t$ are outside sections, $t+s$ is inside a section;
    \item $t$ and $t+s$ are outside sections, $s > 2\tau;$ 
    \item $t$ is inside a section, $t+s$ is inside a section, $s < 2\tau;$
    \item $t$ is inside a section, $t+s$ is outside sections;
    \item $t$ is inside a section, $t+s$ is inside a section, $s > 2\tau.$
\end{itemize}

\subsubsection{$t$ and $t+s$ are outside sections, $s < 2\tau$ (Fig. \ref{fig:scheme1})}

\begin{figure}
        \centering                                 
        \def\svgwidth{0.5\textwidth}
        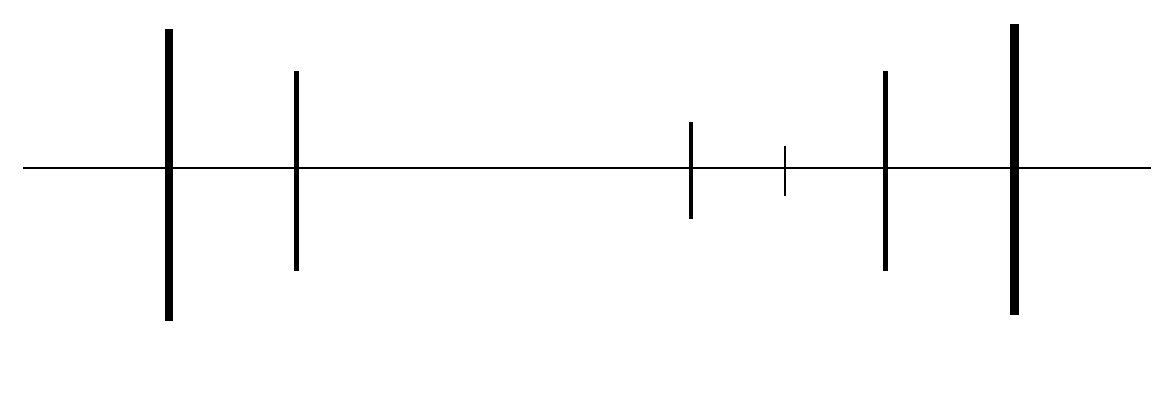
    \caption{$t$ and $t+s$ are outside sections, $s < 2\tau$  }
    \label{fig:scheme1}
\end{figure}

In this case 
$$ \dm(t+s,\basept) = \flow(s,\dm(t,\basept)) = \flow(t+s-k-\tau, \dm(k+\tau,\basept)),$$
thus $\delta_{t,s} = 0.$

\subsubsection{$t$ is outside sections, $t+s$ is inside a section (Fig. \ref{fig:scheme2})}
\label{sssec:toutsin} 

\begin{figure}
        \centering                                 
        \def\svgwidth{0.5\textwidth}
        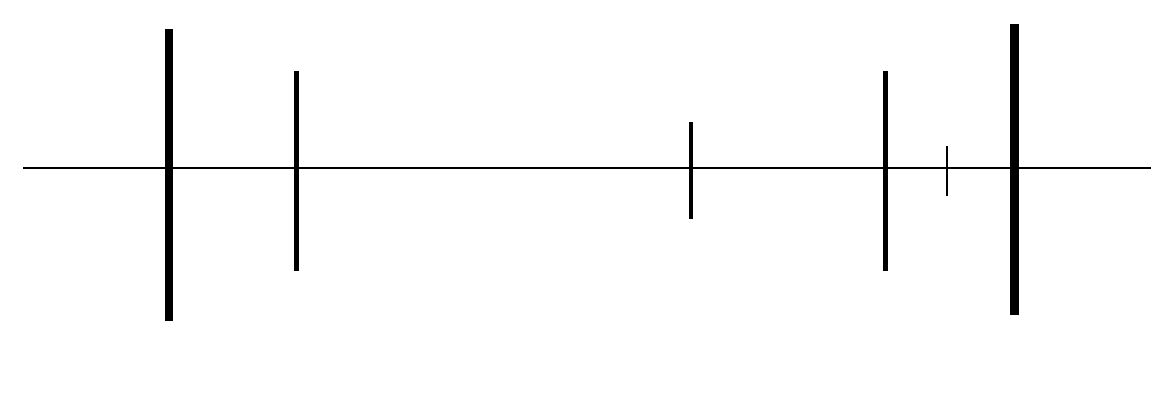
    \caption{$t$ is outside sections, $t+s$ is inside a section  }
    \label{fig:scheme2}
\end{figure}

Note that
\begin{equation}
    \vmod{\Gamma(x,y,v,s) - z} \leq \vmod{x-z} + \vmod{y-z},\quad x,y,z\in\mfd.
    \label{neq:Gamma} 
\end{equation}
 Denote $u = t+s - (k+\tau)$ and 
 $$\maxshearflow = \sup_{x\in\mfd,\ u\in[-1,1]\setminus\{0\}} 
 \frac{\vmod {\flow(u,x)- x} }{u}.$$
 Then
 \begin{gather*}
 \delta_{t,s} = \vmod{\dm(k+1 + t+s - (k+1)), \basept) - 
\flow(s ,\flow(t-(k+\tau),\dm(k+\tau,\basept)))} = \\  
 = \vmod{\Gamma (\om{k+1}(t+s-(k+1),0),\flow(u,\dm(1+\tau,\basept)))- 
\flow(u ,\dm(k+\tau,\basept))} \leq  \\  
 \leq \left| \flow(1-\tau,\dm(k+\tau,\basept)) + 
\varkappa X(\flow(1-\tau,\dm(k+\tau,\basept))) + \right. \\  
 \left.  -\varkappa\nAk{k} dz_k + dz_{k+1} - 
\flow(u,\dm(k+\tau,\basept)) \right| .
\end{gather*}

 It follows that

 \begin{itemize}
     \item \kaocond
        $$ \delta_{t,s}\leq\osmX(u) + \maxderiv d + d 
        \leq \osmX(\tau) + \maxderiv d+ d ;$$
    \item \kazcond
        $$ \delta_{t,s} \leq \maxshearflow u + \maxderiv d+ d 
        \leq \maxshearflow \tau +  d.$$
 \end{itemize}

\subsubsection{$t$ and $t+s$ are outside sections, $s > 2\tau$ (Fig. \ref{fig:scheme3})} 
\label{sssec:est}

\begin{figure}
        \centering                                 
        \def\svgwidth{0.6\textwidth}
        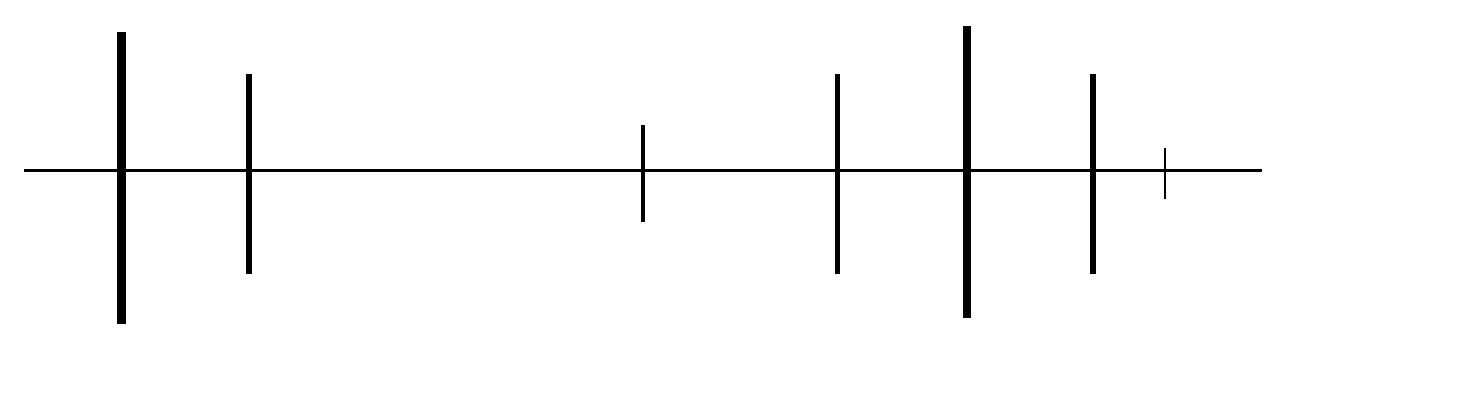
    \caption{$t$ and $t+s$ are outside sections, $s > 2\tau$ }
    \label{fig:scheme3}
\end{figure}

Denote $$\maxdiv = \sup_{x,y\in \mfd,\ x\neq y,\ s\in[-1,1]}
{ \frac{ \vmod{\flow(s,x)-\flow(s,y) } }{\vmod{x-y} } }$$
and $u = k+1+\tau.$
Then
\begin{gather*}
 \delta_{t,s} = \vmod{\flow(t+s-u,\dm(u,\basept) ) - 
\flow(t+s-u,\flow(u-t,\dm(t,\basept) ) ) } \leq  \\ 
 \leq \maxdiv\vmod{ \dm(u,\basept) - \flow(u-t,\dm(t,\basept) ) } .
 \end{gather*}
Now we may use the equality from the previous case. We get the following:
 
\begin{itemize}
     \item \kaocond
        $$ \delta_{t,s} \leq  \maxdiv ( \osmX(\tau) + \maxderiv d + d)   ;$$
    \item \kazcond
        $$ \delta_{t,s} \leq   \maxdiv (\maxshearflow \tau +  d )  .$$
 \end{itemize}

\subsubsection{$t$ is inside a section, $t+s$ is inside a section, $s < 2\tau$ (Fig. \ref{fig:scheme4})} 
\label{sssec:tinsin}

\begin{figure}
        \centering                                 
        \def\svgwidth{0.3\textwidth}
        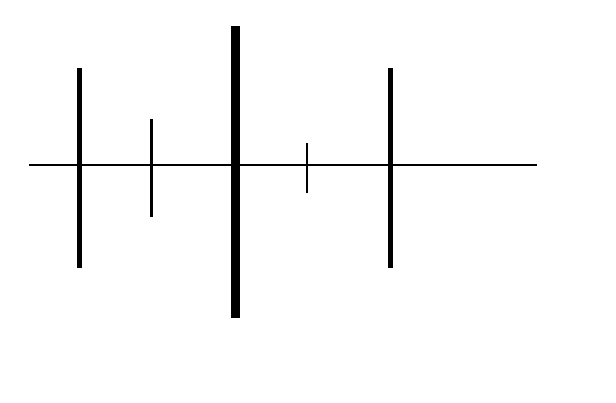
    \caption{$t$ is inside a section, $t+s$ is inside a section, $s < 2\tau$  }
    \label{fig:scheme4}
\end{figure}

Now let $k$ be an integer that is the closest to $t$. I.e., $k=[t]$.


Denote $u = k-\tau.$ Using the estimate from case \ref{sssec:toutsin} we may write

\begin{gather*}
 \delta_{t,s} \leq \vmod{\dm(t+s,\basept) - \flow(t+s-u,\dm(u,\basept) )} + \\
 + \vmod{ \flow(t+s-u,\dm(u,\basept) ) - \flow(s,\dm(t,p))}. 
 \end{gather*}
Thus

\begin{itemize}
     \item \kaocond
\begin{gather*}
         \delta_{t,s} \leq \osmX(t+s-u) + \maxderiv d + d + 
        s \maxdiv  \vmod{\flow(t-u,\dm(u,\basept) ) - \dm(t,\basept) } \leq \\
 \leq \osmX(\tau) + \maxderiv d +d + s \maxdiv (\osmX(t-u) + \maxderiv d + d) \leq \\ 
 \leq \osmX(\tau) + \maxderiv d +d + \maxdiv(\osmX(\tau) + \maxderiv d +d )   ; \\
\end{gather*}
    \item \kazcond
        \begin{gather*}
         \delta_{t,s} \leq \maxshearflow \tau + d  
        + s \maxdiv (\maxshearflow (t-u) + d ) \leq \\
         \leq \maxshearflow \tau + d  + \maxdiv (\maxshearflow \tau + d ). 
     \end{gather*}
 \end{itemize}

\subsubsection{$t$ is inside a section, $t+s$ outside sections (Fig. \ref{fig:scheme5})} 
\label{sssec:tintsout}

\begin{figure}
        \centering                                 
        \def\svgwidth{0.6\textwidth}
        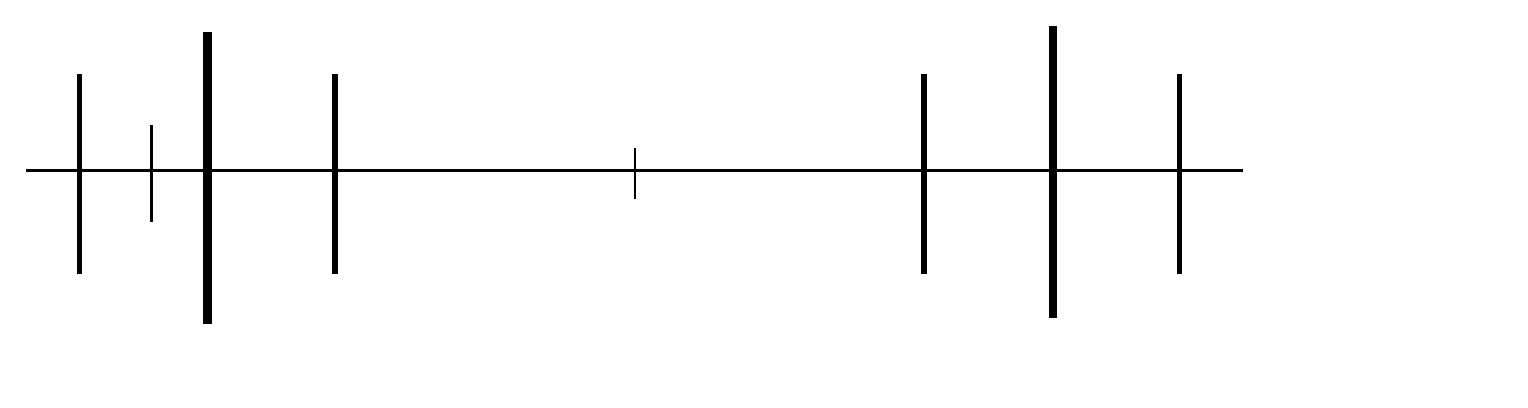
    \caption{ $t$ is inside a section, $t+s$ outside sections }
    \label{fig:scheme5}
\end{figure}

The next estimate follows from the previous one like in case \ref{sssec:est}: 

\begin{itemize}
     \item \kaocond
        $$ \delta_{t,s} \leq \maxdiv (\osmX(\tau) + \maxderiv d +d + \maxdiv(\osmX(\tau) + \maxderiv d +d ) ) ;$$
    \item \kazcond
        $$ \delta_{t,s} \leq \maxdiv(\maxshearflow \tau + d  + \maxdiv (\maxshearflow \tau + d ) ). $$
 \end{itemize}

\subsubsection{$t$ is inside a section, $t+s$ is inside a section, $s > 2\tau$ (Fig. \ref{fig:scheme6})} 
\label{sssec:tintsinagain}

\begin{figure}
        \centering                                 
        \def\svgwidth{0.6\textwidth}
        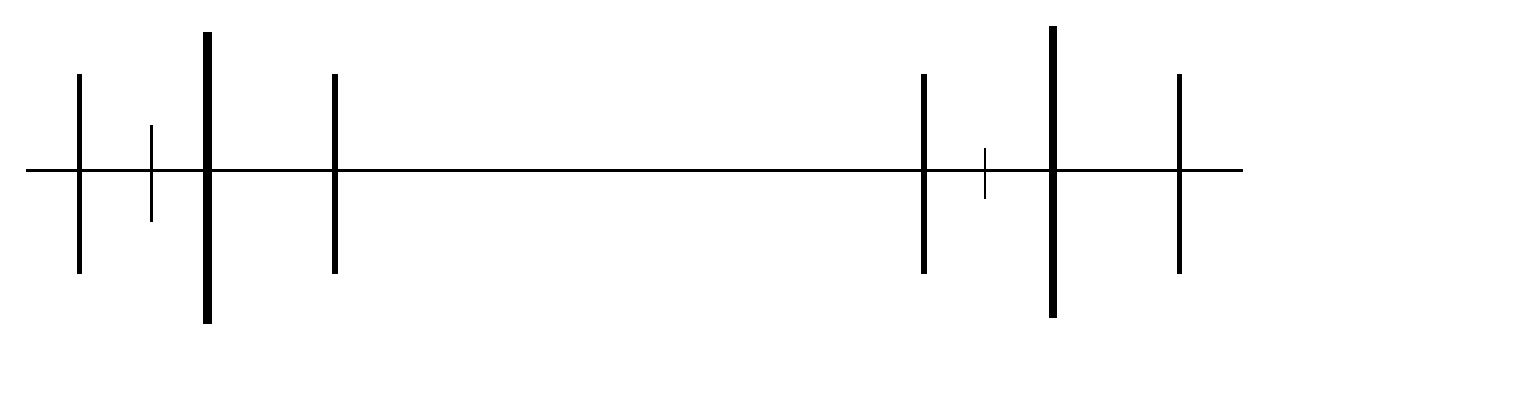
    \caption{$t$ is inside a section, $t+s$ is inside a section, $s > 2\tau$  }
    \label{fig:scheme6}
\end{figure}

Take
\begin{gather*}
 A = \vmod{\om{k+1}(k+1 - (t+s), 0) - \flow(t+s-(t+\tau),\dm(k+\tau,p)) }, \\
 B = \vmod{ \flow(t+s-(t+\tau),\dm(k+\tau,p)) - \flow(s,\dm(t,\basept)) } .
\end{gather*}
Then
$$ \delta_{t,s} \leq A + B.$$
We estimate the summands separately.
By definition,
\begin{gather*}
 \dm(t+s,p) = \Gamma\Bigl( \om{k+1}(k+1 - (t+s),0),  \\
 \flow(t+s-(k+\tau), \dm(k+\tau,\basept)) \Bigr).
\end{gather*}
Note that
$$ \flow(s,\dm(t,\basept)) = \flow(s-(k+\tau-t),\flow(t+\tau-t,\dm(t,p))).$$
Now we use condition (UB): 
\begin{gather*}
 \flow(k+\tau-t,\dm(t,p)) = \flow(k+\tau-t,\nnpk{k} + 
\varkappa X(\nnpk{k}) (t-k) + dz_k - \varkappa dA_kz_{k-1})= \\
 = \nnpk{k} + X(\nnpk{k})(k+\tau -t) + \varkappa X(\nnpk{k})(t-k) + 
dz_k - \varkappa dA_kz_{k-1} + \osmX(d+\tau) .
\end{gather*}
Moreover
$$ \dm(k+\tau,\basept) = \npk{k} + \varkappa X(\nnpk{k} )\tau + 
 dz_k - \varkappa dA_k z_{k-1}.$$
Thus we get
$$ \dm(k+\tau,\basept) - \flow(t+\tau-t,\dm(t,p)) = (1-\varkappa) X(\nnpk{k}) (t-k) 
+ \osmX(\tau + d) .$$
Consequently
\begin{gather*}
 B \leq \normban{ \diff{\flow(t+s-(t+\tau),\cdot)}{x} 
\enbrace{ \dm(k+\tau,\basept)) } } \cdot \\
\cdot \vmod{\dm(k+\tau,\basept) - \flow(t+\tau-t,\dm(t,p))} + \\  
 + \osmX(\vmod{\dm(k+\tau,\basept) - \flow(t+\tau-t,\dm(t,p))}) \leq \\
 \leq \maxderiv ( (1-\varkappa) \maxvfield \tau + \osmX(\tau+d) ) + \osmX(\tau+d).
\end{gather*}

We may estimate $A$ like we did in case \ref{sssec:toutsin}. Therefore  
\begin{itemize}
     \item \kaocond
$$ \delta{t,s} \leq \osmX(\tau) + \maxderiv d+ d +
 \maxderiv (  \osmX(\tau+d) ) + \osmX(\tau+d);
         $$
    \item \kazcond
$$
        \delta{t,s} \leq \maxshearflow \tau +  d
 \maxderiv ( \maxvfield \tau + \osmX(\tau+d) ) + \osmX(\tau+d).
        $$
 \end{itemize}


\subsubsection{The final estimate}

Take $\tau = \taufun(d).$ 
Our estimates imply that $\dm$ is a $\constdmet$-method with
\begin{itemize}
    \item \kaocond
$$\constdmeto =  K_1 d + K_2 \osmX(\taufun(d));$$
    \item \kazcond
$$\constdmetz =  K_3 d + K_4 \taufun(d).$$
\end{itemize}
Here $K_i,\ i=1..4$ are some constants that are independent of $N.$
This means that we can decrease $d$ so that 
 \begin{equation}\label{neq:shadtowork}
     L\constdmet < \min\enbrace{d_0, \half{r} }. 
 \end{equation}

  Moreover if $\varkappa = 1$ then we can guarantee 
  that the following inequality is satisfied
  $$
  L\constdmeto \leq  \frac{\tau}{4N}.
  $$

    \subsection{Shadowing of an exact trajectory by a trajectory of the method}
    
    At first we estimate the value $\delta_k = \vmod{\npk{k} - \baseptk{k}},\ k\in\Nints.$
    For $k=0$ we obviously have  
    $\vmod{ \basept - \npk{\baseptind} } \leq L\constdmetz.$
    The Lipschitz inverse shadowing property guarantees that there 
    exists a point $\shadptz$ and a reparametrization $\reparamz\in\Repe{L\constdmetz}$ 
    such that
    $$ \vmod{\flow(t,\basept)- \dmsz(\reparamz(t), \shadptz)} 
    \leq L\constdmetz \leq \half{r},\quad t\in\Rb. $$
        Fix $k\in \Nints.$  
        Denote 
        $$\repz_k = \reparamz(k+N)-N,\quad 
        \shdtz_k = \dmsz(\repz_k+N,\shadptz)=\dmz(a_k,\shadptz)  .$$
        Thus
            $$ \delta_k  \leq  \vmod{ \baseptk{k} - \shdtz_k } 
            + \vmod{ \shdtz_k - \npk{k} } \leq L\constdmetz +
            \vmod{ \shdtz_k - \npk{k} }.   $$
            Let $k>-N.$ Consider the following cases:

        \begin{itemize}
           \item if $\repz_k > k+\tau$ then
               $$ \dmz(\repz_k,\shadptz) = \flow(\repz_k - (k+\tau), \dmz(k,\shadptz)) =
               \flow(\repz_k - (k+\tau), \npk{k}). $$
               Hence
               $$ \vmod{x_k - \npk{k} } = 
               \vmod{\flow(\repz_k - (k+\tau), \npk{z}) - \npk{k} } 
               \leq \maxshearflow (\repz-k-\tau) \leq 2\maxshearflow L\constdmetz  .$$

           \item If $\repz_k < k-\tau$ then
               \begin{gather*}
                \vmod{x_k - \npk{k} } = 
               \left|  \flow(\repz_k - (k-1+\tau),\dmz(k-1+\tau,\shadptz))  - \right. \\
                \left. - \flow(1 -\tau,\dmz(k-1+\tau,\basept)) -dz_k  \right|= \\
                   = \vmod{\flow(\repz-(k-1+\tau),\npk{k-1}) - 
                  \flow(1 -\tau,\npk{k-1}) } \leq \maxshearflow (\repz_k - k) 
                  \leq \maxshearflow L\constdmetz. 
              \end{gather*}

            \item If $\repz_k \in [k-{\tau},k+{\tau}] $ then
                denote $u = k-1+\tau.$ Thus
               \begin{gather*}
                \vmod{x_k - \npk{k} } = 
                \left| \Gamma( \flow(\repz_k-u,\dmz(u,\shadptz)),\npk{k} ) 
                - \npk{k} \right| \leq \\ 
                 \leq  \vmod{ \flow(\repz_k-u,\dmz(u,\shadptz)) - \npk{k} }.  
               \end{gather*}
                Now we can estimate the above value similarly to the previous case.
        \end{itemize}

        Therefore we just proved that
        \begin{equation}
            \vmod{\npk{k} - \baseptk{k}} \leq \constpknpk d ,\quad k\in\Nints,
            \label{neq:npkbaseptk}
        \end{equation}
        where $\constpknpk$ is a constant that is independent of  $N.$


        Now we prove the solvability of equations using $\dmo$.
        To emphasize the dependence on $d$ we will further
        write upper index $(d).$
        Lipschitz inverse shadowing guarantees that there exists
    a point $\shadpt$ and a reparametrization $\reparam\in\Repe{L\constdmeto}$ such that 
    $$ \vmod{\flow(t,\basept)- \dmsod(\reparam(t), \shadpt)} 
    \leq L\constdmeto \leq \half{r},\quad
        t\in\Rb. $$
        
        
        Choose $d$ small enough for  inequality  \eqref{neq:shadtowork} to work. Then
        \begin{equation}\label{neq:time}
            \vmod{\reparam(k) - k } \leq  L\constdmeto k \leq \frac{\tau}{2},\quad 
            k\in\Ninta.
        \end{equation}

        Denote $\vd = -\basept + \shadpt,$ 
        $$\tdiff_k = \reparam(k+N) - k,\quad \shdt_k = 
        \dmsod(\reparam(k+N),\tvd),\quad k\in\Nints.$$
        Inequality \eqref{neq:time} and the definition of the function  $\gamma$ 
        implies the following equalities: 
        $$ \shdt_k = \omod{k}( \tdiff_k, \vd),\quad k\in\Nints.$$
                Set $\vpreres_k = \dmod(k,\tvd) - \nnpkd{k}$ for $k\in\Nints.$
                Using formula  \eqref{eq:omegadef} we conclude that
                the sequence $\vpreres_k$ solves the following equations:
        $$ \vpreres_{k+1} =   
        \nAkd{k} \vpreres_k + X(\nnpkd{k+1})\tdiff_{k+1} 
            + dz_{k+1},\ k\in\Nint .$$
        Divide them by  $d$ and take
        $$\vres_k = (\shdt_k - \nnpkd{k}) / d,\quad 
            \tdiffoned_k = \tdiff_k/d,\quad k\in\Nints.$$
        Thus
        \begin{equation}
            \vres_{k+1} = \nAkd{k} \vres_k + 
            X(\nnpkd{k+1})\tdiffoned_{k+1} + z_{k+1},\quad k\in\Nint.
             \label{eq:vressolve}
        \end{equation}

        Now we estimate $\vmod{\vpreres_k}:$
        \begin{equation}
             \vmod{\vpreres_k} \leq \vmod{ \dmod(k,\tvd) - \shdt_k} + 
             \vmod{ \shdt_k - \nnpkd{k}},\quad k\in\Nints. 
             \label{neq:vpreres}
        \end{equation}
        At first we deal with the first summand: 
        \begin{gather*}
         \vmod{ \dmod(k,\tvd) - \shdt_k} = 
        \vmod{ \omod{k}(0,\vd) - \omod{k}(\tdiff_k,\vd) } = \\
         = \vmod{ X(\nnpkd{k-1}) \tdiff_k }  \leq 
        \maxvfield L\constdmeto,\quad k\in\Nints.
    \end{gather*}
        
        We can write estimates for the second summand of the right-hand side of  
        \eqref{neq:vpreres}, using inequality
        \eqref{neq:npkbaseptk}:
        \begin{gather*}
        \vmod{ \shdt_k - \nnpkd{k}} \leq \vmod{\baseptk{k} - \nnpkd{k} }
        + \vmod{\shdt_k - \baseptk{k}}  
         \leq \\
        \leq L\constdmeto+ \vmod{\flow(1,\baseptk{k-1}) - 
        \flow(1-\tau,\dmo(k+\tau,\basept) ) } \leq \\
        \leq \vmod{\flow(1,\baseptk{k-1}) - \flow(1-\tau,\npkd{k}) } +
      \vmod{ \flow(1-\tau,\npkd{k}) - \flow(1-\tau,\dmo(k-1+\tau,\basept) ) } \leq \\
      \leq \vmod{X(\nnpkd{k-1}) \tau + A_k(\baseptk{k-1} - \npkd{k-1} )
      + \osmX\enbrace{\vmod{\tau} +\vmod{\baseptk{k-1} - \npkd{k-1}} } } +  \\ 
      + \maxdiv\vmod{\npkd{k-1}-\dmo(k-1+\tau,\basept)} \leq 
      \maxvfield \tau + 2 \maxderiv \constpknpk d + \maxdiv \maxvfield \tau \leq \\
      \leq \maxvfield L\constdmeto + 2 \maxderiv \constpknpk d + 
      \maxdiv \maxvfield L\constdmeto,\quad k\in\Nintw.  
  \end{gather*}

      Denote
        $$L_1 = \oned\enbrace{\maxvfield L\constdmeto +
        \maxvfield L\constdmeto + 2 \maxderiv \constpknpk d + 
        \maxdiv \maxvfield L\constdmeto}. $$

        Therefore
        $$ \vmod{\vres_k} = \vmod{\frac{\vpreres_k}{d}} \leq L_1,\quad k\in\Nints.$$
        Then up to taking a subsequence, there exists the limit 
        $$
        \vfin_k = \lim_{d\to 0} \vres_k, \quad \vmod{\vfin_k} \leq L_1,\quad k\in\Nints.
        $$

        Since by assumption the vector field does not have rest points,
        the values $ \vmod{X(x)} $ are bounded both from above and from  $0$ 
        for any $x\in\mfd.$  
        The values $ \vmod{X(\nnpkd{k+1})\tdiffoned_{k+1} } $ are also bounded  
        due to the fact that all other terms in equality \eqref{eq:vressolve} are bounded.
        Hence the values $\tdiffoned_{k+1}$ 
        are also bounded and converge to  $\tdiffonedfin_{k+1}.$
        Summarizing all these we just have proved that the sequence
        $\seq{\vfin_k}$ satisfies the desired equations
        \begin{gather*}
        \vfin_{k+1} = \ldz \vres_{k+1} = \ldz \nAkd{k} \vres_k + 
        \ldz X(\nnpkd{k+1})\tdiffoned_{k+1} + z_{k+1}=  \\
         = A_k \vfin_k+ X(p_{k+1}) \tdiffonedfin_{k+1} + z_{k+1},\quad k\in\Nint
        \end{gather*}
        and is bounded by a constant $L_1$ that is independent of $N$.
        


\section{Proof of Statement \ref{st:solveqs} for the case of a closed manifold}


Fix a natural $N.$ 
We again will construct a method of the class $\dmetclass{s}$ 
that will \enkav{contain} the equations.
We use the same notation as in the beginning of section  \ref{sec:proofR}.

Let $r_{1}$ be a positive number such that for any 
$x$ the mappings $\exp_x,\ \inv\exp_x$ are defined
and are diffeomprphisms on $B_{r_1}(x)\subset \mfdM$ and
$B_{r_1}(0)\subset T_x \mfdM$ correspondingly.
Withot loss of generality we may assume that
the radius  $r_1$ is small enough so that
the exponential mappings and their inverses distort
distances less then twice:
\begin{IEEEeqnarray*}{rcll}
    \vmod{ \inv\exp_x (y_1) - \inv\exp_x(y_2) } \ & \leq &\  2\dist(y_1, y_2),
    & \quad y_1, y_2 \in B_{r_{1}}(x) \subset M;  \\
    \dist(\exp_x(\tilde{y}_1), \exp_x(\tilde{y}_2) ) \ & \leq &\
    2 \vmod{\tilde{y}_1 - \tilde{y}_2},
    & \quad \tilde{y}_1, \tilde{y}_2 \in B_{r_{1}}(0) \subset T_x M.
\end{IEEEeqnarray*}


We define the method like we did in section \ref{subsec:buildmetR}
with the exception that now we should take into account that we work
on a manifold.

Let $\varkappa$ be equal to either $0$ or $1$ and
$k$ be an integer from the interval $\Nints.$  
Assume that the functions $\dm(k,\cdot),\ \dm(k+\tau,\cdot)$ are already defined. 
Denote
\begin{gather*}
\npk{k} = \dm(k,\basept),\quad \nAk{k} = \diff{\flow(1-\tau,\dm(k+\tau,\basept) )}{x}, \\
 \quad \nnpk{k+1} = \flow(1-\tau,\dm(k+\tau) ),\ \nnpk{\baseptind} = \basept.
\end{gather*}
If $k<N$ we define the function $\om{k+1}:
[-\tau,\tau]\times\tgtM{\baseptind} \to \tgtM{k+1}$ in the following way:
\begin{gather*}
 \om{k+1}(s,v) =  \varkappa \nAk{k} 
\enbrace{ \expi{\npk{k}} \enbrace{\dm(k,\tvM) } + dz_k } +  \\ 
  + \varkappa X ( \nnpk{k+1} ) s + d z_{k+1} - \varkappa d\nAk{k} z_k.
\end{gather*}
        %
        %
        %
We define also the interpolation function
$\Gamma:\mfd\times\mfd\times \mfd \times [-\tau,\tau]\to \mfd :$
\begin{equation*}
    \Gamma(x,y,v,s) =  \gamma(s,\vmod{v})x + (1-\gamma(s,\vmod{v}) ) y  .  
\end{equation*}

Fix a positive  $r$ that is less than $r_1.$ 
Define the method  $\dm$ in the following way:

 \begin{gather*}
\dm(t,x) =  \flow(t,x),    \quad t\in \Rb,\quad x\notin B_r(\basept)\subset \mfdM; \\
    \dm(t,x) =  \flow(t,x),  \quad t\in(-\infty,\tzero-\tau],\quad x\in \mfd. \\
    \dm(k+s,\tvM) =   \\
     =\exp_{\nnpk{k}} \enbrace{ \Gamma \enbrace{ \om{k}(s,v) 
    ,\expi{\nnpk{k} } \enbrace{ \flow(1-\tau+s, \dm(k-1+\tau,\tvM) ) },v,s } } , \\
    k\in[-N+1,N],\quad s\in[-\tau,\tau],\quad v\in B_r(0)\subset T_{\basept} \mfdM;\\
    \dm(k+\tau+s,\tvM) =   \flow(s,\dm(k+\tau,\tvM)), \\
    k\in[-N+1,N-1],\quad s\in[0,1-2\tau],\quad v\in B_r(0)\subset T_{\basept} \mfdM; \\
    \dm(N+\tau+s,\tvM) =   \flow(s,\dm(k+\tau,\tvM)), \\
    s\in[0,\infty),\quad v\in B_r(0)\subset T_{\basept} \mfdM. 
\end{gather*}

Unlike the case of  $\mfdM=\mfd$ here we need some additional assumptions 
for the definition to be correct. 
We need all the mappings $\exp$ and $\inv{\exp}$ that were used to be defined. 
 It is easy to see that it is enough to guarantee that the following inequalities
 are satisfied:
\begin{IEEEeqnarray*}{c}            
    \dist\enbrace{\npk{k}, \dm(k,\tvM) } \leq r_1,\quad k\in\Nints; \\
    \dist\enbrace{\nnpk{k},  \flow(1-\tau+s, \dm(k-1+\tau,\tvM) ) } \leq r_1, \quad k\in\Nint; \\
 \vmod{\Gamma \enbrace{ \om{k}(s,v)
    ,\expi{\nnpk{k} } \enbrace{ \flow(1-\tau+s, \dm(k-1+\tau,\tvM) ) },v,s } } \leq r_1,\\ 
    k\in\Nint.
\end{IEEEeqnarray*}

One can verify that the right-hand sides of these inequalities can be
bounded from above by $10 \maxderiv ^ {2N} (d + r + \tau \maxvfield).$ 
This means that it is enough to take 
 $d,r$ and $\tau$ small enough.

The rest of the proof is fully analogous to the case of  $M=\Rb^d.$

\section*{Acknowledgements}

 Research was supported by RFBR (project 12-01-00275) and the Chebyshev laboratory (grant of the Russian government N 11.G34.31.0026).

 I am grateful to S. Yu. Pilyugin and S. Tikhomirov for their helpful comments and advice
 and to A. Petrov for discussions.

\bibliography{bibrefs}
\bibliographystyle{cDSS.bst}


\end{document}